%
%
%
%
%
\documentclass[12pt]{amsart}

\oddsidemargin=17pt \evensidemargin=17pt
\headheight=9pt     \topmargin=26pt
\textheight=576pt   \textwidth=433.8pt

\usepackage{amssymb,amsmath,amsthm,amsxtra,hyperref}
\usepackage{tikz}

\newcommand{\cA}{\mathcal{A}}
\newcommand{\BB}{\mathcal{B}}
\newcommand{\Dunb}{D_{\mathrm{unb}}}
\newcommand{\Ddeg}{D_{\mathrm{deg}}}
\newcommand{\EE}{\mathcal{E}}
\newcommand{\FF}{\mathcal{F}}
\newcommand{\GG}{\mathbb{G}}
\newcommand{\mm}{\mathfrak{m}}
\newcommand{\Moo}{\overline{\mathcal{M}}_{0,0}}
\newcommand{\OO}{\mathcal{O}}
\newcommand{\PP}{\mathbb{P}}
\newcommand{\QQ}{\mathbb{Q}}
\newcommand{\ZZ}{\mathbb{Z}}

\DeclareMathOperator{\coker}{coker}
\DeclareMathOperator{\Mor}{Mor}
\DeclareMathOperator{\Pic}{Pic}
\DeclareMathOperator{\rank}{rank}
\DeclareMathOperator{\Supp}{Supp}

\theoremstyle{plain}
\newtheorem{theorem}{Theorem}[section]
\newtheorem{lemma}[theorem]{Lemma}

\newtheorem{proposition}[theorem]{Proposition}

\theoremstyle{definition}

\begin{document}

\mbox{}
\vspace{-1.1ex}
\title[Effective cone of the Quot scheme compactifying $\Mor_d(\PP^1,\text{Grassmannian})$]{The effective cone of the space of parametrized rational curves in a Grassmannian}
\author{Shin-Yao Jow}
\address{Department of Mathematics,
University of Pennsylvania,
Philadelphia, PA 19104}
\email{\texttt{jows@math.upenn.edu}}
\date{13 July 2011}

\begin{abstract}
We determine the effective cone of the Quot scheme parametrizing all rank~$r$, degree~$d$ quotient sheaves of the trivial bundle of rank~$n$ on $\PP^1$. More specifically, we explicitly construct two effective divisors which span the effective cone, and we also express their classes in the Picard group in terms of a known basis.
\end{abstract}

\keywords{}
\subjclass[2000]{14D20, 14E99, 14H10}

\maketitle

\section{Introduction} 

Let $V$ be an $n$-dimensional vector space over an algebraically closed ground field, and let $V_{\PP^1}=\OO_{\PP^1}\otimes V$ be the trivial bundle of rank~$n$ on $\PP^1$. In this paper, we determine the effective cone of the Quot scheme $R$ parametrizing all degree~$d$, rank~$r$ quotient sheaves of $V_{\PP^1}$.

The Quot scheme $R$ is closely related to the Kontsevich moduli spaces of genus-zero stable maps to Grassmannians. Indeed let $\GG$ be the Grassmannian of $r$-dimensional quotient spaces of $V$. Then $R$ is a compactification of $R^0=\Mor_d(\PP^1,\GG)$, the space of all degree~$d$ morphisms from $\PP^1$ to $\GG$, or equivalently, the space of all degree~$d$, rank~$r$ quotient bundles of $V_{\PP^1}$. One can also consider Quot schemes over more general curves other than $\PP^1$, and like the Kontsevich moduli spaces, they have been used in enumerative geometry (see for example \cite{Ber94}, \cite{Ber97}, \cite{BDW}, \cite{RRW}, \cite{Ram06}, \cite{Ram09}).

Many basic properties of the Quot scheme $R$ were established in \cite{Str} by 
Str\o mme. For example he showed that $R$ is an irreducible, rational, nonsingular projective variety of dimension $nd+r(n-r)$ \cite[Theorem~2.1]{Str}. It is not too difficult to see that if $r=n-1$ then $R$ is a projective space, and if $d=0$ then $R$ is just the Grassmannian $\GG$, so in these cases $\Pic R\cong \ZZ$ \cite[Proposition~6.1]{Str}. In all other cases, i.e. $0\le r \le n-2$ and $d\ge 1$, Str\o mme proved that $\Pic R\cong \ZZ^2$, and he gave generators for the nef cone of $R$ \cite[Theorem~6.2]{Str}. More specifically, there is a universal short exact sequence on $R\times \PP^1$: \[
 0 \to \cA \to V_{R\times \PP^1} \to \BB \to 0. \]
For each point $p\in R$, $\BB_p=\BB|_{\{p\}\times \PP^1}$ is the degree~$d$, rank~$r$ quotient sheaf of $V_{\PP^1}$ represented by $p$. Denote by $\pi_1$ and $\pi_2$ the two projection maps from $R\times \PP^1$ to $R$ and $\PP^1$, respectively. Let $\BB(m)=\BB\otimes \pi_2^*\OO_{\PP^1}(m)$, and let $B_m={\pi_1}_*\BB(m)$. Then the divisor classes $c_1(B_{d-1})$ and $c_1(B_d)-c_1(B_{d-1})$ on $R$ form a $\ZZ$-basis for $\Pic R$ and also span the nef cone.

There is another useful $\ZZ$-basis for $\Pic R$ described by Ramirez in \cite[\S 3]{Ram09}.\footnote{Although there was a standing assumption that $n=4$ and $r=2$ throughout \cite{Ram09}, the part in \S 3 about changing basis for $\Pic R$ can be straightforwardly generalized.} Let $h=c_1\bigl(\pi_2^*\OO_{\PP^1}(1)\bigr)$. Then the divisor classes \[
 Y={\pi_1}_*\bigl(h\cdot c_1(\BB)\bigr), \quad D={\pi_1}_*\bigl(c_2(\BB)\bigr) \]
form a $\ZZ$-basis for $\Pic R$. Indeed a Grothendieck-Riemann-Roch computation \cite[Lemma~3.2]{Ram09} yields\footnote{There is a sign error in the second formula in \cite[Lemma~3.2]{Ram09}. Except for that, both formulae again hold true for any $n\ge 2$, $0\le r \le n-2$, and $d\ge 1$.} 
 \begin{gather*}
  c_1(B_d)-c_1(B_{d-1})=Y, \\
  c_1(B_{d-1})=2dY-D.
 \end{gather*}
Thus in terms of the basis $Y$ and $D$, the nef cone is spanned by $Y$ and $2dY-D$.
 
The divisors $D$ and $Y$ have geometric interpretations, at least when $r\ge 2$. To see this, let $\PP(V)$ be the projective space of one-dimensional subspaces of $V$. Each point $p\in R^0$ can be viewed as a parametrized rational scroll $\PP(\cA_p)\to \PP(V)$ of dimension $n-r$ in $\PP(V)$. Hence those points in $R^0$ which, when viewed as rational scrolls, meet a fixed $(r-2)$-dimensional subspace in $\PP(V)$, form a divisor of $R^0$, and the closure of this divisor in $R$ is $D$ (note that it was shown in \cite[Corollary~3.3.8]{Shao} that the complement of $R^0$ in $R$ is irreducible of codimension $r$). Similarly, those points in $R^0$ for which, when viewed as rational scrolls, the fibers over a fixed point $0\in \PP^1$ meet a fixed $(r-1)$-dimensional subspace in $\PP(V)$, form a divisor of $R^0$, and $Y$ is the closure of this divisor in $R$.

In order to describe the effective cone of $R$, we construct two effective divisors $\Dunb$ and $\Ddeg$, and then use test curves to show that they span the effective cone, as well as to express their classes in $\Pic R$ in terms of the basis $Y$ and $D$. The definitions of $\Dunb$ and $\Ddeg$ were inspired by the work of Coskun-Starr \cite{CS} on the effective cone of the Kontsevich moduli space $\Moo(\GG,d)$. They did not have a complete description of the cone when $r<d$, but they showed that if $n-r$ and $d$ are fixed, then the effective cone of $\Moo(\GG,d)$ grows as $r$ increases, and stabilizes when $r\ge d$. They then went on to show that the stable effective cone, i.e. the effective cone of $\Moo(\GG,d)$ when $d=r$, is spanned by  the boundary divisors and two more effective divisors which they named $\Dunb$ and $\Ddeg$ (the subscripts stand for ``unbalanced'' and ``degenerate'', respectively). To state the definitions, let $k=n-r$ be the rank of the universal subsheaf $\cA$. The definition of $\Dunb$ depends on whether $k$ divides $d$. When $k\mid d$, we define $\Dunb$ in the same way as Coskun-Starr: \[
 \Dunb=\{p\in R \mid \text{$\cA_p$ has unbalanced splitting (as a locally free sheaf on $\PP^1$)} \}. \]
When $k\nmid d$, set $d=kd_1+(k-\ell_1)$ for $0<\ell_1<k$. Then for all $p$ outside a codimension two locus in $R$, $\cA_p$ splits as a direct sum of $\ell_1$ copies of $\OO_{\PP^1}(-d_1)$ and $(k-\ell_1)$ copies of $\OO_{\PP^1}(-d_1-1)$. In this situation the direct sum of the $\ell_1$ copies of $\OO_{\PP^1}(-d_1)$ is a distinguished subsheaf $\EE_p$ of $\cA_p$, whose projectivization $\PP(\EE_p)\to \PP(V)$ is the directrix of the scroll $\PP(\cA_p)\to \PP(V)$. We define $\Dunb$ to be the closure in $R$ of the locus of $p$ whose corresponding directrix $\PP(\EE_p)\to \PP(V)$ meets a fixed $(n-2-\ell_1)$-dimensional subspace in $\PP(V)$. Note that this is different from the definition of Coskun-Starr in \cite{CS}, which asked that the \emph{linear span} of the directrix meets a fixed subspace in $\PP(V)$ of codimension $\ell_1(d_1+1)$. We modified it because it does not work when $d\gg r$.

Coskun-Starr defined $\Ddeg$ on $\Moo(\GG,d)$, $d=r\ge 2$, to be the divisor of maps whose corresponding scrolls degenerate, namely lie in some hyperplane. This does not give a divisor as soon as $d>r$, so new approaches are required to formulate a definition of $\Ddeg$ on $R$ which will work for all $d\ge 1$. Our starting point is the observation that in the case $d=r\ge 2$ considered by Coskun-Starr, if $p\in R^0$, then the scroll corresponding to the subbundle $\cA_p$ is degenerate if and only if the quotient bundle $\BB_p$ has unbalanced splitting (Proposition~\ref{p:Ddeg}). Inspired by this, whenever $r\ge 2$ we define $\Ddeg$ to be the closure in $R$ of the locus of those $p\in R^0$ whose corresponding quotient bundles $\BB_p$ are ``unbalanced in the previously defined sense'', which means unbalanced splitting if $r\mid d$, or incidence condition on the directrix of the dual scroll $\PP(\BB^{\spcheck}_p)\to \PP(V^{\spcheck})$ if $r\nmid d$ (details are spelled out in Section~\ref{s:intersect generators}, the paragraph following Proposition~\ref{p:Ddeg}). When $r=1$ however, no $p$ in $R^0$ has unbalanced quotient bundle $\BB_p$; on the other hand, the complement of $R^0$ in $R$ becomes an irreducible divisor by \cite[Corollary~3.3.8]{Shao}. Therefore it is natural to define $\Ddeg=R\setminus R^0$ when $r=1$. Finally when $r=0$, $\BB_p$ is a torsion sheaf of degree~$d$ for every $p\in R$. Generically the support of $\BB_p$ consists of $d$ distinct points on $\PP^1$, and we define $\Ddeg$ to be the locus of $p\in R$ for which some of the $d$ points in $\Supp \BB_p$ coincide.

With all the necessary definitions in place, we can now state the main theorem.

\begin{theorem} \label{t:main}
 Let $R$ be the Quot scheme parametrizing all rank~$r$, degree~$d$ quotient sheaves of the trivial bundle of rank~$n$ on $\PP^1$. Let $k=n-r$ and assume that $k\ge 2$ and $d\ge 1$. Then the effective cone of $R$ is spanned by the two effective divisors $\Dunb$ and $\Ddeg$ defined above. Moreover, their classes in $\Pic R$ can be expressed in terms of the basis $D$ and $Y$ as
 \begin{align*}
   \Dunb &= c_1\Bigl( -D + \bigl(d+\Bigl\lceil \frac{d}{k} \Bigr\rceil \bigr)Y \Bigr), \\
   \Ddeg &= \begin{cases} \displaystyle
   c_2\Bigl(D + \bigl( -d+\Bigl\lceil \frac{d}{r} \Bigr\rceil \bigr)Y \Bigr), &\text{if\/ $r>0$;} \\
   2(d-1)Y, &\text{if\/ $r=0$,}
   \end{cases} 
  \end{align*}
for some positive $c_1$ and $c_2$. Precisely, $c_1=1$ if\/ $k\mid d$ and $r\ne 0$, while $c_1=d_1(\ell_1+1)$ if\/ $k\nmid d$ and we set\/ $d=kd_1+(k-\ell_1)$ for\/ $0<\ell_1<k$. Similarly $c_2=1$ if\/ $r\mid d$, while $c_2=d_2(\ell_2+1)$ if\/ $r\nmid d$ and we set\/ $d=rd_2+(r-\ell_2)$ for\/ $0<\ell_2<r$.
\end{theorem}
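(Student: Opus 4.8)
The plan is to split the argument into two essentially independent tasks: (i) computing the divisor classes $[\Dunb]$ and $[\Ddeg]$ in $\Pic R$, and (ii) showing that these two classes span the effective cone. For (i) the natural tool is a family of rational curves in $R$ (test curves) on which one can compute intersection numbers against $Y$, $D$, and also against $\Dunb$ and $\Ddeg$ directly from their geometric definitions. Concretely I would construct, for each of $\Dunb$ and $\Ddeg$, a pencil of quotient sheaves --- i.e. a map $f\colon \PP^1\to R$ --- chosen so that (a) the corresponding family of subsheaves $\cA$ (resp. quotients $\BB$) on $\PP^1\times\PP^1$ is explicit enough to read off $f^*Y$ and $f^*D$ via the push-forward formulas $Y={\pi_1}_*(h\cdot c_1(\BB))$ and $D={\pi_1}_*(c_2(\BB))$, and (b) the incidence/unbalancedness locus defining the divisor meets the pencil transversally in a computable number of points, giving $f^*\Dunb$ or $f^*\Ddeg$. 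Since $Y,D$ form a $\ZZ$-basis, two such pencils with independent images in $H_2$ pin down each class; I expect the ceiling functions $\lceil d/k\rceil$ and $\lceil d/r\rceil$ to emerge from the arithmetic of how a generic $\cA$ (resp. $\BB^{\spcheck}$) splits, i.e. from the $\ell_1,\ell_2$ bookkeeping, and the constants $c_1,c_2$ from the multiplicity with which the pencil hits the relevant determinantal locus (a jet/transversality count, where the $d_i(\ell_i+1)$ factor comes from degeneration along the directrix). The $r=0$ case is separate and easier: $\BB$ is a degree-$d$ torsion sheaf, $R$ is essentially a symmetric-product-type space, $Y$ is the class of ``support meets a point'' and $\Ddeg$ is the discriminant, so $\Ddeg=2(d-1)Y$ by the classical branch-divisor computation.

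For (ii), the strategy is to exhibit, for each of the two rays $\RR_{\ge0}[\Dunb]$ and $\RR_{\ge0}[\Ddeg]$, a \emph{covering family} of curves in $R$ --- a family of irreducible curves sweeping out a dense subset of $R$ --- that has intersection number zero with that divisor. A divisor meeting a covering family nonnegatively, and meeting one member in zero, must be a nonnegative combination of the generators of the cone; more precisely, if $C_{\mathrm{unb}}$ is a covering family with $C_{\mathrm{unb}}\cdot\Dunb=0$ and $C_{\mathrm{deg}}$ a covering family with $C_{\mathrm{deg}}\cdot\Ddeg=0$, and if moreover $C_{\mathrm{unb}}\cdot\Ddeg>0$ and $C_{\mathrm{deg}}\cdot\Dunb>0$, then any effective class $E$ satisfies $E\cdot C_{\mathrm{unb}}\ge0$ and $E\cdot C_{\mathrm{deg}}\ge0$, which places $E$ in the closed cone bounded by the two rays dual to $C_{\mathrm{unb}},C_{\mathrm{deg}}$ --- and that cone is exactly $\langle\Dunb,\Ddeg\rangle$ provided the classes $[\Dunb],[\Ddeg]$ are themselves the edges, i.e. provided $[\Dunb]\cdot C_{\mathrm{unb}}=0=[\Ddeg]\cdot C_{\mathrm{deg}}$. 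So the two tasks interlock: the class computation in (i) must be consistent with the vanishing in (ii), and once both are in hand the cone is determined. The curves $C_{\mathrm{unb}}$ I would take to be pencils in which the directrix $\PP(\EE_p)$ (or the unbalanced-splitting type) stays fixed away from the fixed linear subspace, so the incidence never occurs; $C_{\mathrm{deg}}$ similarly should fix the behavior of $\BB^{\spcheck}$ (resp., for $r=1$, lie inside a fiber of the forgetful map so as never to leave $R^0$, and for $r=0$, be a family with constant-multiplicity support).

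The technical heart, and the step I expect to be the main obstacle, is constructing these covering families and verifying both their covering property and the vanishing of the relevant intersection number --- especially in the ``unbalanced'' regime $k\nmid d$ (and dually $r\nmid d$), where the divisor is defined by an incidence condition on the directrix rather than by a splitting-type jump. One must show that the directrix sub-bundle $\EE$ varies in a controlled algebraic family over a dense open of $R$ (this uses the codimension-two statement quoted from the introduction, and a relative-Harder--Narasimhan argument for $\cA$ over $\PP^1\times(\text{base})$), identify the class of the incidence divisor in terms of $c_1(\EE)$ pushed forward, and then produce a pencil along which $\PP(\EE_p)$ sweeps out a subvariety disjoint from the chosen $(n-2-\ell_1)$-plane. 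Handling the boundary $R\setminus R^0$ correctly --- checking that $\Dunb,\Ddeg$ really are the \emph{closures} of the loci described, with no extra boundary components and the right multiplicities --- is where I expect to spend the most care; here the irreducibility and codimension of $R\setminus R^0$ from \cite{Shao} and Str\o mme's description of $R$ as a smooth rational variety are the essential inputs. Once the class computation and the covering-curve dual pairing are both established, the theorem follows by the duality between the effective cone and the cone of movable (covering) curves.
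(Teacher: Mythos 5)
Your two-part strategy (compute the classes via test curves; bound the cone via curves with zero intersection) is the same as the paper's, and your part (i) matches it closely: the paper constructs families over an auxiliary curve $C$ given by explicit maps $A_S\to V_S$ (for $\Dunb$) and $V_S\to B_S$ (for $\Ddeg$), pairs them with $Y$ and $D$ by Whitney/Chern-class bookkeeping, and extracts the coefficients --- including the constants $d_i(\ell_i+1)$, which come out of a $c_1^2-c_2$ computation on the directrix subsheaf and a resultant/determinant count, not a transversality multiplicity --- by letting the degrees $a_i$ of the twisting line bundles vary independently (one family of curves then yields two independent linear relations, which is slightly more efficient than your two separate pencils). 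Where you genuinely diverge is part (ii): you propose the standard route of exhibiting algebraic \emph{covering families} dual to each boundary ray and invoking the pairing between effective divisors and movable curves. The paper explicitly reports being unable to carry out the Coskun--Starr moving-curve construction once $d>r$, and replaces it with a weaker, purely pointwise statement (its Lemma~\ref{l:span Eff}): because $\rank\Pic R=2$, it suffices that through each point $p$ of some open set there pass \emph{some} curve (depending on $p$, not required to fit into an algebraic family) with zero intersection against $\Dunb$, and another against $\Ddeg$; one then tests an arbitrary effective $E$ against a curve through a point off $E$. This sidesteps exactly the step you flag as your ``technical heart'' --- verifying the covering property and the relative Harder--Narasimhan control of the directrix over a family --- so while your route is not wrong in principle, it leaves its hardest ingredient unconstructed, and the paper's lemma shows that ingredient is unnecessary here. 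Two smaller points: for $r=0$ the identity $\Ddeg=2(d-1)Y$ is not purely the classical discriminant count when $k\mid d$ --- the paper needs an extra curve $\gamma$, obtained by deforming the stalk of a torsion quotient at one support point, with $\gamma\cdot Y=\gamma\cdot\Ddeg=0$, both to kill the $D$-coefficient and to feed Lemma~\ref{l:span Eff}; and the positivity of $c_1$ in that case is deduced only a posteriori from the inclusion of the nef cone in the effective cone, not computed.
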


We do not know the value of $c_1$ when $r=0$ and $k\mid d$, although the natural guess is that it is $1$. The nef cone and the effective cone of $R$ are shown in  Figure~\ref{f:cones}.

\begin{figure}[htbp] 
\begin{center}
\begin{tikzpicture}
\begin{scope}[xshift=-4.1cm]
\draw (-3,0) -- (3,0) node[right]{$D$};
\draw (0,-2.7) node[below=7pt]{When $r>0$;} -- (0,3) node[above]{$Y$};

\draw (0,0) -- (-1.5,3);
\draw (-1.8,3.3) node{$-2d$};
\fill[green!20!white] (0,0) -- (-1.5,3) -- (0,3) -- cycle;
\draw (-1,2) node[above right]{Nef};
\draw (0,0) -- (-2.5,3);
\draw (-3.5,3.2) node{$-(d+\lceil \frac{d}{k} \rceil )$};
\draw (0,0) -- (3,-2) node[below]{$-d+ \lceil \frac{d}{r} \rceil $};
\fill[blue!20!white, fill opacity=0.5] (0,0) -- (-2.5,3) -- (3,3) -- (3,-2) -- cycle;
\draw (1,1) node[above]{Eff};
\end{scope}
\begin{scope}[xshift=4.1cm]
\draw (-2.7,0) -- (2.1,0) node[right]{$D$};
\draw (0,-2.7) node[below=7pt]{when $r=0$.} -- (0,3) node[above]{$Y$};

\draw (0,0) -- (-1.5,3);
\draw (-1.8,3.3) node{$-2d$};
\fill[green!20!white] (0,0) -- (-1.5,3) -- (0,3) -- cycle;
\draw (-1,2) node[above right]{Nef};
\draw (0,0) -- (-2.5,3);
\draw (-3.5,3.2) node{$-(d+\lceil \frac{d}{k} \rceil )$};
\fill[blue!20!white, fill opacity=0.5] (0,0) -- (-2.5,3) -- (0,3) -- cycle;
\draw (-0.5,1) node[above]{Eff};
\end{scope}
\end{tikzpicture}
\end{center}
\caption{The nef cone and the effective cone of $R$. Left: $r>0$. Right: $r=0$. The slopes of the boundaries of the cones are labeled.}
\label{f:cones}
\end{figure}
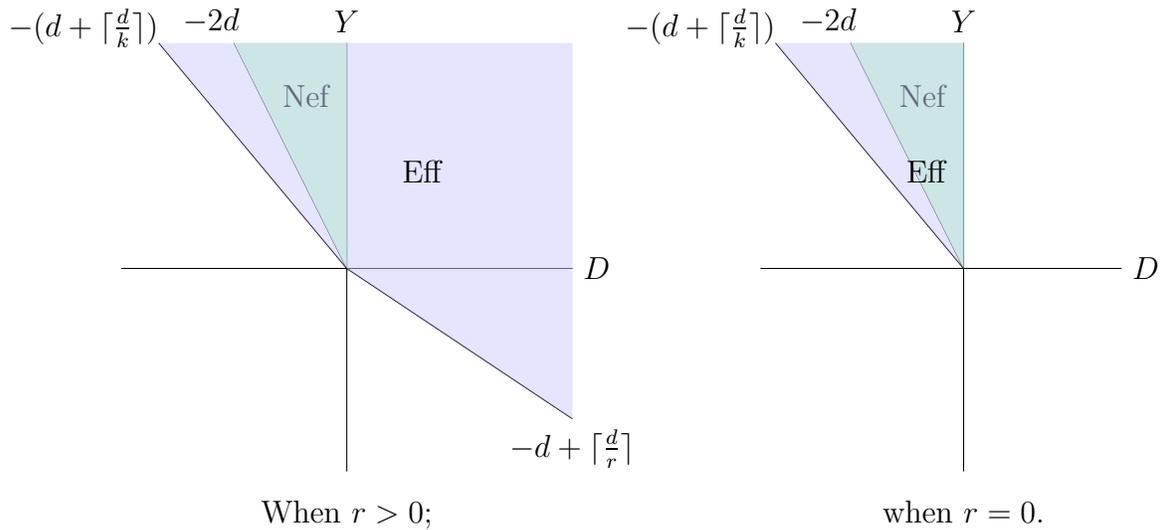

In order to prove Theorem~\ref{t:main}, we start with constructing test curves on $R$ in Section~\ref{s:test curves}. Then we compute the intersection numbers of the test curves with the divisors $D$ and $Y$ in Section~\ref{s:intersect basis}, and with $\Dunb$ and $\Ddeg$ in Section~\ref{s:intersect generators}. These intersection numbers come into the proof of Theorem~\ref{t:main} given in Section~\ref{s:proof r>0} (the case $r>0$) and Section~\ref{s:proof r=0} (the case $r=0$). It is perhaps worth mentioning that we use a somewhat nonstandard Lemma~\ref{l:span Eff} to show that the two divisors $\Dunb$ and $\Ddeg$ span the effective cone, which avoids the task of constructing moving curves in the usual strategy.

\section*{Acknowledgments}

The author first learned about this problem in the AMS 2010 Mathematics Research Communities summer conference on Birational Geometry and Moduli Spaces, which the author greatly benefited from. The author would also like to thank Izzet Coskun and Yijun Shao for answering questions about the Quot scheme.  

\section{Test curves on the Quot scheme} \label{s:test curves}

In this section, we define the test curves we will be using to intersect with the various divisors on the Quot scheme $R$. To this end, pick an arbitrary smooth projective curve $C$ and $k$ line bundles $A_1,\ldots,A_k$ on $C$ with nonzero sections. Let $a_i=\deg A_i\ge 0$ for $i=1,\ldots,k$. Let $S=C\times \PP^1$, and let  $\pi_1\colon S\to C$ and $\pi_2\colon S\to \PP^1$ be the two projection maps. If $L$ is a line bundle on $C$ and $m\in\ZZ$, we denote by $L(m)$ the line bundle $\pi_1^*L\otimes \pi_2^*\OO_{\PP^1}(m)$ on $S$. Let $A_S$ be the direct sum of the line bundles \[
 A_S=\bigoplus_{i=1}^k A_i^{\spcheck}(-m_i) \]
on $S$, where if $k\mid d$ then $m_i=d/k$ for all $i$, and if $k\nmid d$, set $d=kd_1+(k-\ell_1)$ for $0<\ell_1<k$, and choose $m_i=d_1$ for $i\le \ell_1$ and $m_i=d_1+1$ for $i>\ell_1$. (In other words, $m_1+\cdots+m_k=d$ is the ``most balanced'' partition of $d$ into $k$ parts.) Given a section $s_i$ of $A_i(m_i)\otimes V$ for each $i=1,\ldots,k$, one obtains a sheaf homomorphism \[
 \varphi\colon A_S=\bigoplus_{i=1}^k A_i^{\spcheck}(-m_i) \longrightarrow V_S=\OO_S\otimes V, \]
which is simply multiplication by $s_i$ on each summand $A_i^{\spcheck}(-m_i)$. If  $a_i$'s are sufficiently large and $s_i$'s are sufficiently general, the restriction of $\varphi$ on each fiber of $\pi_1$ is injective, so its cokernel is a degree~$d$, rank~$r$ quotient sheaf of $V_{\PP^1}$. Hence $\varphi$ induces a morphism \[
 \alpha\colon C\longrightarrow R, \] 
which is one of the test curves we will use.

When $r>0$, we can construct another curve \[
 \beta\colon C\longrightarrow R \]
which is induced by a surjective sheaf homomorphism \[
 \psi\colon V_S \longrightarrow B_S=\bigoplus_{i=1}^r B_i(n_i), \]
where $B_i$ are line bundles of degrees $b_i\ge 0$ on $C$, and $n_i=d/r$ for all $i$ if $r\mid d$, while if $r\nmid d$, write $d=rd_2+(r-\ell_2)$ for $0<\ell_2<r$, and choose $n_i=d_2$ for $i\le \ell_2$ and $n_i=d_2+1$ for $i>\ell_2$. (In other words, $n_1+\cdots+n_r=d$ is the ``most balanced'' partition of $d$ into $r$ parts.)  Since we always have $k\ge 2$, such a surjective homomorphism $\psi$ exists as long as $b_i=\deg B_i$ are sufficiently large.

When $r=0$, we can no longer construct the curve $\beta$ as above, so a separate construction is needed. This will be done in Section~\ref{s:proof r=0}.

\section{Intersections of the test curves with $D$ and $Y$} \label{s:intersect basis}

In this section, we compute the intersection numbers $\alpha\cdot D$, $\alpha\cdot Y$, $\beta\cdot D$, and $\beta\cdot Y$, where $\alpha$ and $\beta$ are the test curves defined in Section~\ref{s:test curves}, and $D$ and $Y$ are the divisor classes which form a $\ZZ$-basis for $\Pic R$ described in the Introduction.

Recall that if  \[
 0 \to \cA \to V_{R\times \PP^1} \to \BB \to 0 \]
is the universal short exact sequence on $R\times \PP^1$, and $h=c_1\bigl(\pi_2^*\OO_{\PP^1}(1)\bigr)$, then  \[
 Y={\pi_1}_*\bigl(h\cdot c_1(\BB)\bigr),\quad D={\pi_1}_*\bigl(c_2(\BB)\bigr). \]
By Whitney's formula, \[
 c_1(\BB)=-c_1(\cA), \quad c_2(\BB)=c_1(\cA)^2-c_2(\cA). \]
So we also have \[
 Y={\pi_1}_*\bigl(-h\cdot c_1(\cA)\bigr),\quad D={\pi_1}_*\bigl(c_1(\cA)^2-c_2(\cA)\bigr). \]
Recall that the curve $\alpha\colon C\to R$ is induced by an injective sheaf homomorphism $\varphi\colon A_S\to V_S$ on $S=C\times \PP^1$, where \[
 A_S=\bigoplus_{i=1}^k A_i^{\spcheck}(-m_i). \]
Hence 
\begin{align*}
\alpha\cdot Y&=-h\cdot c_1(A_S)=\sum_{i=1}^k a_i\,; \\
\alpha\cdot D&=c_1(A_S)^2-c_2(A_S)=2d\sum_{i=1}^k a_i - \sum_{1\le i\ne j \le k}a_im_j \\
&\phantom{MMMMMlllllll||||}{}=\begin{cases} 
  \displaystyle (d+d_1) \sum_{i=1}^k a_i, &\text{if $k\mid d$;} \\
  \displaystyle (d+d_1)\sum_{i=1}^{\ell_1}a_i + (d+d_1+1) \sum_{i=\ell_1+1}^k a_i, &\text{if $k\nmid d$,}
 \end{cases} 
\end{align*}
where $a_i=\deg A_i$ for all $i$, $d_1=\lfloor d/k \rfloor$, and $\ell_1=k(d_1+1)-d$. 

The computation for the other curve $\beta$ is similar. Recall that $\beta\colon C\to R$ is induced by a surjective sheaf homomorphism $\psi\colon V_S\to B_S$ on $S=C\times \PP^1$, where \[
 B_S=\bigoplus_{i=1}^r B_i(n_i). \]
Hence 
\begin{align*}
\beta\cdot Y&=h\cdot c_1(B_S)=\sum_{i=1}^r b_i\,; \\
\beta\cdot D&=c_2(B_S)=\sum_{1\le i\ne j \le r}b_in_j 
 =\begin{cases} 
  \displaystyle (d-d_2) \sum_{i=1}^r b_i, &\text{if $r\mid d$;} \\
  \displaystyle (d-d_2)\sum_{i=1}^{\ell_2}b_i + (d-d_2-1) \sum_{i=\ell_2+1}^r b_i, &\text{if $r\nmid d$,}
 \end{cases} 
\end{align*}
where $b_i=\deg B_i$ for all $i$, $d_2=\lfloor d/r \rfloor$, and $\ell_2=r(d_2+1)-d$. 

\section{Intersections of the test curves with $\Dunb$ and $\Ddeg$} \label{s:intersect generators}

In this section, we compute the intersection numbers $\alpha\cdot \Dunb$, $\alpha\cdot \Ddeg$, $\beta\cdot \Dunb$, and $\beta\cdot \Ddeg$, where $\alpha$ and $\beta$ are the test curves defined in Section~\ref{s:test curves}, and $\Dunb$ and $\Ddeg$ are the effective divisors described in the Introduction which are to be shown to span the effective cone of $R$. The answers are summarized in Proposition~\ref{p:intersect generators} at the end of the section.

Let  \[
 0 \to \cA \to V_{R\times \PP^1} \to \BB \to 0 \]
be the universal short exact sequence on $R\times \PP^1$. Recall that when $k\mid d$, we define  \[
 \Dunb=\{p\in R \mid \text{$\cA_p$ has unbalanced splitting (as a locally free sheaf on $\PP^1$)} \}. \]
So we see that $\alpha\cdot \Dunb=0$ in this case. If $k\nmid d$, then for all $p$ outside a codimension two locus in $R$, $\cA_p$ splits as a direct sum of $\ell_1$ copies of $\OO_{\PP^1}(-d_1)$ and $(k-\ell_1)$ copies of $\OO_{\PP^1}(-d_1-1)$, and the direct sum of the $\ell_1$ copies of $\OO_{\PP^1}(-d_1)$ is a distinguished subsheaf $\EE_p$ of $\cA_p$. On the curve $\alpha$ this corresponds to the subsheaf \[  E_S=\bigoplus_{i=1}^{\ell_1} A_i^{\spcheck}(-d_1) \]
of $A_S=E_S\oplus \bigoplus_{i=\ell_1+1}^{k} A_i^{\spcheck}(-d_1-1)$. Recall that $\Dunb$ in this case is defined by an incidence condition on the directrix $\PP(\EE_p)\to\PP(V)$, hence $\alpha\cdot \Dunb$ can be interpreted as $c_2\bigl(\coker(E_S \to V_S)\bigr)$, which equals $c_1(E_S)^2-c_2(E_S)$ by Whitney's formula. Thus \[
\alpha\cdot \Dunb= c_1(E_S)^2-c_2(E_S)=2d_1\ell_1\sum_{i=1}^{\ell_1}a_i-d_1(\ell_1-1)\sum_{i=1}^{\ell_1}a_i=d_1(\ell_1+1)\sum_{i=1}^{\ell_1}a_i. \]

Before turning to the computation of $\alpha\cdot \Ddeg$, let us elaborate on the construction of $\Ddeg$ itself a little bit more. Our definition of $\Ddeg$ was motivated by the following observation.

\begin{proposition} \label{p:Ddeg}
Let\/ $0\to A\to V_{\PP^1}\to B\to 0$ be a short exact sequence of vector bundles on $\PP^1$, where as always $V_{\PP^1}=\OO_{\PP^1}\otimes V$ is the trivial vector bundle of rank~$n$, $k=\rank A$, $r=\rank B$, and $d=\deg B$. If $d=r$, then the image of\/ $\PP(A)\to\PP(V)$ is degenerate (i.e. lies in some hyperplane) if and only if $B$ is unbalanced (i.e. not isomorphic to $\bigoplus_{i=1}^r \OO_{\PP^1}(1)$).
\end{proposition}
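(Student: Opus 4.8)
The plan is to reduce the geometric condition on the scroll $\PP(A)\to\PP(V)$ to a (non)vanishing statement for $H^0(\PP^1,B^{\spcheck})$, and then to read that off directly from the splitting type of $B$, using the hypothesis $d=r$.

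First I would make the incidence condition explicit. Under the convention that $\PP(V)$ is the space of one-dimensional subspaces of $V$, every hyperplane of $\PP(V)$ has the form $H_\lambda=\PP(\ker\lambda)$ for some nonzero $\lambda\in V^{\spcheck}$, and the image of $\PP(A)\to\PP(V)$ is the union $\bigcup_{x\in\PP^1}\PP(A_x)$ of the projectivizations of the fibres $A_x\subseteq V$. Hence the image is degenerate if and only if there is a nonzero $\lambda\in V^{\spcheck}$ with $A_x\subseteq\ker\lambda$ for every $x\in\PP^1$. Regarding $\lambda$ as a sheaf homomorphism $V_{\PP^1}\to\OO_{\PP^1}$, this says that the composite $A\hookrightarrow V_{\PP^1}\xrightarrow{\lambda}\OO_{\PP^1}$ vanishes on every fibre; since a nonzero homomorphism of sheaves on $\PP^1$ is nonzero on all but finitely many fibres, this composite must be the zero map. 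Equivalently, $\lambda$ factors through the quotient $B=V_{\PP^1}/A$, i.e. $\lambda$ lies in the image of $\operatorname{Hom}(B,\OO_{\PP^1})\to\operatorname{Hom}(V_{\PP^1},\OO_{\PP^1})=V^{\spcheck}$. That map is injective (because $V_{\PP^1}\to B$ is surjective) and $\operatorname{Hom}(B,\OO_{\PP^1})\cong H^0(\PP^1,B^{\spcheck})$, so the image of $\PP(A)\to\PP(V)$ is degenerate if and only if $H^0(\PP^1,B^{\spcheck})\ne 0$.

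Next I would compute $H^0(B^{\spcheck})$ from the splitting type. Writing $B\cong\bigoplus_{i=1}^r\OO_{\PP^1}(e_i)$, we have $\sum_{i=1}^r e_i=\deg B=d=r$ and $H^0(B^{\spcheck})=\bigoplus_{i=1}^r H^0\bigl(\OO_{\PP^1}(-e_i)\bigr)$, which is nonzero precisely when some $e_i\le 0$. But the $e_i$ are $r$ integers with sum $r$, so if every $e_i\ge 1$ then $\sum e_i\ge r$ with equality only when all $e_i=1$; hence some $e_i\le 0$ if and only if the $e_i$ are not all equal to $1$, i.e. if and only if $B\not\cong\bigoplus_{i=1}^r\OO_{\PP^1}(1)$. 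Combining the two paragraphs gives: the image of $\PP(A)\to\PP(V)$ is degenerate if and only if $B$ is unbalanced, which is the assertion.

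I do not anticipate a genuine obstacle here: once the first reduction is in place, everything is routine bookkeeping. The one step calling for care is that first reduction --- matching the ``one-dimensional subspaces'' convention for $\PP(V)$ with the description of hyperplanes via functionals in $V^{\spcheck}$, and verifying that containment of the scroll in a fixed hyperplane is a fibrewise, hence sheaf-theoretic, condition on the inclusion $A\hookrightarrow V_{\PP^1}$.
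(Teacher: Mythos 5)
Your argument is correct and follows essentially the same route as the paper: both identify $H^0(\PP^1,B^{\spcheck})$ with the linear functionals on $V$ vanishing on the subbundle $A$ (the paper via the dual exact sequence and left-exactness of global sections, you via factoring $\lambda\colon V_{\PP^1}\to\OO_{\PP^1}$ through $B$), and then observe that for $\deg B=\rank B=r$ this space is nonzero exactly when $B\not\cong\bigoplus_{i=1}^r\OO_{\PP^1}(1)$. Your second paragraph just spells out the splitting-type bookkeeping that the paper leaves implicit.
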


\begin{proof}
There is an induced short exact sequence $0\to B^{\spcheck}\to V^{\spcheck}_{\PP^1}\to A^{\spcheck}\to 0$ of the dual bundles, which in turn induces an exact sequence \[
0\to H^0(\PP^1,B^{\spcheck})\to H^0(\PP^1,V^{\spcheck}_{\PP^1})\to H^0(\PP^1,A^{\spcheck}) \]
of the spaces of global sections. Identifying $H^0(\PP^1,V^{\spcheck}_{\PP^1})$ with $V^{\spcheck}$, it follows that \[
H^0(\PP^1,B^{\spcheck})=\{f\in V^{\spcheck}\mid \text{$f=0$ on the bundle $A$}\}. \]
Since $d=r$, $B$ is unbalanced if and only if $H^0(\PP^1,B^{\spcheck})\ne 0$, namely there exists a nonzero $f\in V^{\spcheck}$ which is zero on the bundle $A$, or equivalently, the image of $\PP(A)\to\PP(V)$ lies in the hyperplane $f=0$.
\end{proof}

As mentioned in the Introduction, Proposition~\ref{p:Ddeg} suggests defining $\Ddeg$ in general to be ``the locus where $B$ is unbalanced''. To make this precise, first suppose $r\ge 2$. Recall that the open subset $R^0$ of $R$ which parametrizes degree~$d$, rank~$r$ quotient bundles of $V_{\PP^1}$ has an irreducible complement $R\setminus R^0$ of codimension $r$ \cite[Corollary~3.3.8]{Shao}. Since $r\ge 2$, defining $\Ddeg$ on $R$ is thus equivalent to defining it on $R^0$. If $r\mid d$, we simply define $\Ddeg$ on $R^0$ to be \[
 \Ddeg=\{p\in R^0 \mid \text{$\BB_p$ has unbalanced splitting (as a locally free sheaf on $\PP^1$)} \}. \]
If $r\nmid d$, set $d=rd_2+(r-\ell_2)$ for $0<\ell_2<r$. Then for all $p\in R^0$ outside a codimension two subset, $\BB^{\spcheck}_p$ splits as a direct sum of $\ell_2$ copies of $\OO_{\PP^1}(-d_2)$ and $(r-\ell_2)$ copies of $\OO_{\PP^1}(-d_2-1)$. In this situation the direct sum of the $\ell_2$ copies of $\OO_{\PP^1}(-d_2)$ is a distinguished subsheaf $\FF_p$ of $\BB^{\spcheck}_p$, whose projectivization $\PP(\FF_p)\to \PP(V^{\spcheck})$ is the directrix of the scroll $\PP(\BB^{\spcheck}_p)\to \PP(V^{\spcheck})$. We define $\Ddeg$ to be the closure in $R$ of the locus of $p$ such that the image of $\PP(\FF_p)\to \PP(V^{\spcheck})$ meets a fixed $(n-2-\ell_2)$-dimensional subspace in $\PP(V^{\spcheck})$.

We now begin to compute $\alpha\cdot\Ddeg$ in the case $r\ge 2$ and $r\mid d$.  Recall that the curve $\alpha\colon C\to R$ is induced by an injective sheaf homomorphism \[
\varphi\colon A_S=\bigoplus_{j=1}^k A_j^{\spcheck}(-m_j)\longrightarrow V_S \]
on $S=C\times \PP^1$. Choosing a basis $v_1,\ldots,v_n$ for $V$ and a basis $x,y$ for $H^0\bigl(\PP^1,\OO_{\PP^1}(1)\bigr)$, we can express $\varphi$ as an $n\times k$ matrix \[
 \varphi=\biggl(\sum_{t=0}^{m_j}s_{ij,t}\,x^ty^{m_j-t}\biggr)_{\substack{1\le i\le n \\ 1\le j \le k}} \]
where $s_{ij,t}\in H^0(C,A_j)$. We choose the line bundles $A_j$'s to be sufficiently positive and the sections $s_{ij,t}$'s to be sufficiently general so that the cokernel of $\varphi$ is locally free, which can be done since $r\ge 2$. In other words, we choose the curve $\alpha\colon C\to R$ to lie in $R^0$, so that the intersection number $\alpha\cdot\Ddeg$ can be computed geometrically. Let $B_S=\coker\varphi$. (Note that, unlike the $B_S$ which defines the curve $\beta$ in Section~\ref{s:test curves}, the $B_S=\coker\varphi$ here is not a direct sum of line bundles on $S$.) Then $\alpha\cdot\Ddeg$ equals the number of points $p\in C$ such that $B_{S,p}=B_S|_{\{p\}\times \PP^1}$ has unbalanced splitting, or equivalently $H^0\bigl(\PP^1,B^{\spcheck}_{S,p}(d_2-1)\bigr)\ne 0$ where $d_2=d/r$. Applying a similar argument as in the proof of Proposition~\ref{p:Ddeg} to the short exact sequence  \[
 0\to A_{S,p} \xrightarrow{\varphi_p} V_{S,p} \to B_{S,p} \to 0 \]
of vector bundles on $\{p\}\times \PP^1$, we see that  \[
 H^0\bigl(\PP^1,B^{\spcheck}_{S,p}(d_2-1)\bigr)=\bigl\{f\in H^0\bigl(\PP^1,\OO_{\PP^1}(d_2-1)\bigr)\otimes V^{\spcheck} \bigm| \text{$f=0$ on $A_{S,p}$}\bigr\}. \]
Expressing $f$ as a row vector \[
 f=\biggl( \sum_{t=0}^{d_2-1}f_{1,t}\,x^ty^{d_2-1-t},\ldots,\sum_{t=0}^{d_2-1}f_{n,t}\,x^ty^{d_2-1-t}\biggr) \]
with respect to the dual basis of $v_1,\ldots,v_n$, the condition that $f=0$ on $A_{S,p}$ is equivalent to the matrix product $f\cdot\varphi_p$ being $0$, where $\varphi_p$ denotes the matrix $\varphi$ with every $s_{ij,t}$ evaluated at $p$. For each fixed $p\in C$, $f\cdot\varphi_p=0$ can be viewed as a system of linear equations in $nd_2$ unknowns $f_{i,t}$, $1\le i\le n$, $0\le t\le d_2-1$. Precisely, for each $j\in\{1,\ldots,k\}$ and each $t\in\{0,\ldots,m_j+d_2-1\}$ there is a linear equation \[
 \sum_{\substack{t_1+t_2=t\\ 1\le i\le n}} s_{ij,t_1}(p)\cdot f_{i,t_2}=0. \]
The number of equations is thus \[
 kd_2+\sum_{j=1}^k m_j=kd_2+d=kd_2+rd_2=nd_2, \]
which is the same as the number of unknowns. Hence the existence of a nonzero solution $f$ is equivalent to the vanishing of the determinant of the coefficient matrix of the linear system. This determinant is a section of the line bundle \[
 A_1^{\otimes(m_1+d_2)}\otimes\cdots\otimes A_k^{\otimes(m_k+d_2)} \]
on $C$. Therefore \[
\alpha\cdot\Ddeg=\sum_{i=1}^k a_i(m_i+d_2)=\begin{cases} 
  \displaystyle (d_1+d_2) \sum_{i=1}^k a_i, &\text{if $k\mid d$;} \\
  \displaystyle (d_1+d_2)\sum_{i=1}^{\ell_1}a_i + (d_1+d_2+1) \sum_{i=\ell_1+1}^k a_i, &\text{if $k\nmid d$.}
 \end{cases}  \]
This is the formula for $\alpha\cdot\Ddeg$ when $r\mid d$ and $r\ge 2$. Incidentally, the same formula still holds when $r=1$. Recall from the Introduction that when $r=1$, $\Ddeg$ is defined to be $R\setminus R^0$, which is an irreducible divisor by   \cite[Corollary~3.3.8]{Shao}. It turns out that its divisor class is equal to $D$ (Proposition~\ref{p:Ddeg when r=1}), and one readily checks that the above formula with $r=1$ (namely $d_2=d$) gives the same answer as the formula for $\alpha\cdot D$ computed in Section~\ref{s:intersect basis}.

\begin{proposition} \label{p:Ddeg when r=1}
If\/ $r=1$, then $\Ddeg=R\setminus R^0$ is an irreducible divisor whose divisor class is equal to $D={\pi_1}_*\bigl(c_2(\BB)\bigr)$.
\end{proposition}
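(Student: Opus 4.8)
\emph{Strategy.} The plan is to pin down the class of the boundary divisor $R\setminus R^0$ (which is irreducible of codimension $r=1$ by \cite[Corollary~3.3.8]{Shao}, hence a prime divisor) by intersecting it with the two test curves $\alpha,\beta$ of Section~\ref{s:test curves}. Since $r=1$ and $k=n-1\ge 2$, both curves are available, and by the computations of Section~\ref{s:intersect basis} the pairing matrix of $\{\alpha,\beta\}$ against the basis $\{D,Y\}$ of $\Pic R$ is nondegenerate: $\beta\cdot D=0$ is an empty sum, $\beta\cdot Y=b_1>0$, and $\alpha\cdot D>0$. Hence a divisor class on $R$ is determined by its intersection numbers with $\alpha$ and $\beta$, and it suffices to prove
\[
 \alpha\cdot(R\setminus R^0)=\alpha\cdot D \qquad\text{and}\qquad \beta\cdot(R\setminus R^0)=0.
\]

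\emph{The curve $\beta$.} Recall that $\beta\colon C\to R$ is induced by a surjection $\psi\colon V_S\to B_S$ with $B_S=B_1(d)$ a line bundle on $S=C\times\PP^1$ (here $r=1$ so $n_1=d$). Restricting to a fibre $\{p\}\times\PP^1$ shows that the quotient sheaf represented by $\beta(p)$ is the line bundle $B_S|_{\{p\}\times\PP^1}$, in particular locally free; thus $\beta(C)\subseteq R^0$ and $\beta\cdot(R\setminus R^0)=0=\beta\cdot D$.

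\emph{The curve $\alpha$.} Now $\alpha\colon C\to R$ is induced by the injective bundle map $\varphi\colon A_S=\bigoplus_{i=1}^k A_i^{\spcheck}(-m_i)\to V_S$ on $S$, and $\alpha(p)$ lies in $R\setminus R^0$ exactly when $\coker(\varphi|_{\{p\}\times\PP^1})$ has torsion, i.e.\ when the fibre $\{p\}\times\PP^1$ meets the locus $W\subseteq S$ on which $\varphi$ has rank $\le k-1$. For sufficiently positive $A_i$ and sufficiently general sections, $W$ is a reduced zero-dimensional scheme (the expected dimension of the rank-$\le(k-1)$ locus of a map of bundles of ranks $k$ and $n$ on the surface $S$), its points lie over distinct points of $C$, and $\alpha$ meets the divisor $R\setminus R^0$ transversally; then $\alpha\cdot(R\setminus R^0)$ is the number of points of $W$, which by the Thom--Porteous formula (with $V_S$ trivial and $n-k=1$) equals
\[
 \deg W = c_2\bigl(V_S-A_S\bigr)=c_1(A_S)^2-c_2(A_S).
\]
By Section~\ref{s:intersect basis} this is precisely $\alpha\cdot D$. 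Combining with the previous paragraph and the nondegeneracy of the pairing gives $R\setminus R^0=D$ in $\Pic R$.

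\emph{Main obstacle.} The delicate point is the general-position bookkeeping for $\alpha$: that for general choices the degeneracy locus $W$ is reduced of the expected dimension $0$, that its points have distinct images in $C$, and that $\alpha$ meets the prime divisor $R\setminus R^0$ transversally, so that each intersection point is counted with multiplicity one and $\alpha\cdot(R\setminus R^0)=\deg W$. A cleaner-looking but less self-contained alternative is to argue directly on $R\times\PP^1$: the universal inclusion $\cA\hookrightarrow V_{R\times\PP^1}$ has a rank-$\le(n-2)$ locus $W$ with $\pi_1(W)=R\setminus R^0$; if one knows that $W$ has pure codimension $2$ (using that the relative torsion of $\BB$ over $R$ is fibred in finite schemes, together with \cite[Corollary~3.3.8]{Shao}) and that $\pi_1|_W$ is birational onto $R\setminus R^0$, then Thom--Porteous gives $[W]=c_1(\cA)^2-c_2(\cA)=c_2(\BB)$ in $A^2(R\times\PP^1)$ and pushing forward yields $R\setminus R^0={\pi_1}_*\bigl(c_2(\BB)\bigr)=D$ at once. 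The burden there is exactly the purity and degree-one statements, which cannot be obtained from a genericity argument because $\cA$ is the fixed universal subsheaf.
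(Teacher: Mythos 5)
Your route is genuinely different from the paper's, and most of it is sound: the observation that $\beta\cdot D=0$, $\beta\cdot Y>0$, $\alpha\cdot D>0$ makes the pairing against $\{\alpha,\beta\}$ nondegenerate, the fact that $\beta$ lies in $R^0$ is immediate, and the Thom--Porteous count $\deg W=c_2(V_S-A_S)=c_1(A_S)^2-c_2(A_S)=\alpha\cdot D$ is correct. But the step you yourself flag as the ``main obstacle'' is a genuine gap, not just bookkeeping: to conclude $\alpha\cdot(R\setminus R^0)=\deg W$ you must know that each point of $W$ contributes intersection multiplicity exactly one to $\alpha^*\Ddeg$, which requires understanding the local structure of the prime divisor $R\setminus R^0$ near a general boundary point (and likewise, your second route requires purity and generic injectivity for the universal degeneracy locus of $\cA\hookrightarrow V_{R\times\PP^1}$, which, as you note, cannot be arranged by genericity). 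Neither version of the argument is closed in your write-up, and closing it honestly would take real work.

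The paper sidesteps all of this with a primitivity trick that you should keep in your toolbox. Since $r=1$, the universal quotient $\BB$ is a line bundle over $R^0\times\PP^1$, so $c_2(\BB)$ restricts to zero there; by the localization sequence for Chow groups, $D={\pi_1}_*\bigl(c_2(\BB)\bigr)$ is therefore represented by a divisor supported on the irreducible boundary $R\setminus R^0$, i.e.\ $D=c\,\Ddeg$ for some integer $c$. Because $D$ is a member of the $\ZZ$-basis $\{D,Y\}$ of $\Pic R\cong\ZZ^2$, it is primitive, forcing $c=\pm1$; and $c=1$ follows from $\alpha\cdot D>0$ together with $\alpha\cdot\Ddeg\ge0$ (as $\alpha$ is not contained in $\Ddeg$). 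This replaces your entire transversality analysis with the single soft fact that a primitive class proportional to a prime divisor must equal $\pm$ that divisor, and it only uses the one intersection number you already trust, namely $\alpha\cdot D$.
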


\begin{proof}
Since $r=1$, $\BB|_{R^0}$ is a line bundle and hence $c_2(\BB)|_{R^0}=0$. It follows that the divisor class $D={\pi_1}_*\bigl(c_2(\BB)\bigr)$ can be represented by some divisor supported on $\Ddeg=R\setminus R^0$, thus $D=c\Ddeg$ for some integer $c$ since $R\setminus R^0$ is irreducible by \cite[Corollary~3.3.8]{Shao}. Recall from the Introduction that $D$ and $Y$ form a $\ZZ$-basis for $\Pic R$, hence $c$ must be $\pm 1$. To conclude that $c=1$, simply observe that $\alpha\cdot D>0$ by the formula in Section~\ref{s:intersect basis}, and $\alpha\cdot \Ddeg\ge 0$ since $\alpha$ does not lie in $\Ddeg$. 
\end{proof}

Note that if we want to compute $\beta\cdot\Dunb$ when $k\mid d$, the situation is completely dual to the computation we just did for $\alpha\cdot\Ddeg$ when $r\mid d$. So without extra work we have \[
\beta\cdot\Dunb=\sum_{i=1}^r b_i(n_i+d_1)=\begin{cases} 
  \displaystyle (d_1+d_2) \sum_{i=1}^r b_i, &\text{if $r\mid d$;} \\
  \displaystyle (d_1+d_2)\sum_{i=1}^{\ell_2}b_i + (d_1+d_2+1) \sum_{i=\ell_2+1}^r b_i, &\text{if $r\nmid d$.}
 \end{cases}  \]
Similarly, the computation of $\beta\cdot\Ddeg$ when $r\ge 2$ is dual to the computation of $\alpha\cdot\Dunb$ that was done in the beginning of this section. If $r=1$ then obviously $\beta\cdot\Ddeg=0$, since $\beta$ lies in $R^0$ while $\Ddeg=R\setminus R^0$ in this case. Hence for all $r>0$ we have \[
\beta\cdot\Ddeg=\begin{cases} 
  0, &\text{if $r\mid d$;} \\
  \displaystyle d_2(\ell_2+1)\sum_{i=1}^{\ell_2}b_i, &\text{if $r\nmid d$.}
 \end{cases}  \] 

Let us summarize the intersection numbers that were computed in this section:

\begin{proposition} \label{p:intersect generators}
Let $\alpha$ and $\beta$ be the test curves defined in Section~\ref{s:test curves}. Let $d_1=\lfloor d/k \rfloor$, $d_2=\lfloor d/r \rfloor$, and let $\ell_1$, $\ell_2$, $a_i$, and $b_i$ be as defined in Section~\ref{s:test curves}. We have
\begin{enumerate}
\item $\alpha\cdot\Dunb=\begin{cases} 
  0, &\text{if $k\mid d$;} \\
  \displaystyle d_1(\ell_1+1)\sum_{i=1}^{\ell_1}a_i, &\text{if $k\nmid d$.}
 \end{cases}$
\item If $k\mid d$, then \[
\beta\cdot\Dunb=\begin{cases} 
  \displaystyle (d_1+d_2) \sum_{i=1}^r b_i, &\text{if $r\mid d$;} \\
  \displaystyle (d_1+d_2)\sum_{i=1}^{\ell_2}b_i + (d_1+d_2+1) \sum_{i=\ell_2+1}^r b_i, &\text{if $r\nmid d$.}
 \end{cases}  \]
\item If $r\mid d$, then \[
\alpha\cdot\Ddeg=\begin{cases} 
  \displaystyle (d_1+d_2) \sum_{i=1}^k a_i, &\text{if $k\mid d$;} \\
  \displaystyle (d_1+d_2)\sum_{i=1}^{\ell_1}a_i + (d_1+d_2+1) \sum_{i=\ell_1+1}^k a_i, &\text{if $k\nmid d$.}
 \end{cases}  \]
\item $\beta\cdot\Ddeg=\begin{cases} 
  0, &\text{if $r\mid d$;} \\
  \displaystyle d_2(\ell_2+1)\sum_{i=1}^{\ell_2}b_i, &\text{if $r\nmid d$.}
 \end{cases}$ 
\end{enumerate}
\end{proposition}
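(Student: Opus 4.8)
The plan is to obtain the four formulas by assembling the degeneracy‑locus computations already carried out in this section, supplying in full the two dualizations that were only sketched there. The logical skeleton is: (1) and (3) are computed directly on the surface $S=C\times\PP^1$; (2) is dual to (3); (4) is dual to (1); and the edge case $r=1$ in (3) and (4) is reduced to Proposition~\ref{p:Ddeg when r=1}.

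For (1), I would note that $\Dunb$ vanishes on $\alpha$ when $k\mid d$ because $\cA_{\alpha(p)}=\bigoplus_i\OO_{\PP^1}(-d/k)$ is balanced for every $p\in C$, while for $k\nmid d$ the directrix subsheaf pulls back to $E_S=\bigoplus_{i=1}^{\ell_1}A_i^{\spcheck}(-d_1)$, so that, using the interpretation $\alpha\cdot\Dunb=c_2\bigl(\coker(E_S\to V_S)\bigr)$ recorded above together with Whitney's formula, $\alpha\cdot\Dunb=c_1(E_S)^2-c_2(E_S)=d_1(\ell_1+1)\sum_{i=1}^{\ell_1}a_i$. For (3) with $r\ge2$ the key point is that one may choose $\alpha$ to lie in $R^0$, reduce unbalancedness of $B_{S,p}$ to $H^0\bigl(\PP^1,B_{S,p}^{\spcheck}(d_2-1)\bigr)\ne0$ exactly as in Proposition~\ref{p:Ddeg}, and observe that this cohomology group is the solution space of a linear system that is \emph{square}: the number of equations is $kd_2+\sum_j m_j=kd_2+d=nd_2$, equal to the number of unknowns. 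Hence the relevant locus is the zero scheme of the determinant of the coefficient matrix, a section of $\bigotimes_{i=1}^{k}A_i^{\otimes(m_i+d_2)}$ on $C$, so $\alpha\cdot\Ddeg=\sum_i a_i(m_i+d_2)$, which rearranges into the two displayed cases. The value $r=1$ is not covered by this argument, but there $\Ddeg=R\setminus R^0$ has class $D$ by Proposition~\ref{p:Ddeg when r=1}, and $\alpha\cdot D$ was already computed in Section~\ref{s:intersect basis}; one checks that it agrees with the displayed formula at $d_2=d$.

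Finally I would make the duality precise for (2) and (4). The curve $\beta$ arises from a surjection $\psi\colon V_S\to B_S=\bigoplus_i B_i(n_i)$, and dualizing the induced sequence gives $0\to B_S^{\spcheck}\to V_S^{\spcheck}\to\cA^{\spcheck}\to0$ with $B_S^{\spcheck}=\bigoplus_i B_i^{\spcheck}(-n_i)$ again a direct sum of line bundles on $S$; thus the pair $(B_i^{\spcheck},n_i)$ and the integers $r,d_2$ play for $\beta$ exactly the roles that $(A_i^{\spcheck},m_i)$ and $k,d_1$ played for $\alpha$. Since $\cA_p$ is unbalanced precisely when $H^0\bigl(\PP^1,\cA_p(d_1-1)\bigr)\ne0$, the computation of $\beta\cdot\Dunb$ for $k\mid d$ is verbatim that of $\alpha\cdot\Ddeg$ for $r\mid d$ under this substitution, giving (2); likewise the directrix of $\PP(B_{S,p}^{\spcheck})$ pulls back to $\bigoplus_{i=1}^{\ell_2}B_i^{\spcheck}(-d_2)$, so the computation of $\beta\cdot\Ddeg$ for $r\nmid d$ copies that of $\alpha\cdot\Dunb$ for $k\nmid d$, giving (4); and $\beta\cdot\Ddeg=0$ when $r\mid d$ because $B_{S,p}=\bigoplus_i\OO_{\PP^1}(d/r)$ is balanced for every $p$ (in particular $\beta\subset R^0$ when $r=1$, where $\Ddeg=R\setminus R^0$). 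The only genuine obstacle is the genericity bookkeeping: one must verify that with the $A_i$, $B_i$ sufficiently positive and the defining sections sufficiently general the test curves avoid $\Dunb$ and $\Ddeg$, the relevant cokernels are locally free of the expected ranks, and the determinantal sections above are not identically zero, so that each intersection number equals the degree of the corresponding line bundle on $C$; this is exactly what the "sufficiently general" hypotheses of Section~\ref{s:test curves} guarantee, and the forced split into $r\ge2$ versus $r=1$ is the one place where this genericity cannot be arranged and Proposition~\ref{p:Ddeg when r=1} must be invoked instead.
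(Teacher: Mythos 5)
Your proposal is correct and follows essentially the same route as the paper: parts (1) and (3) are computed directly on $S=C\times\PP^1$ (via $c_2$ of the cokernel of the directrix subsheaf, and via the determinant of the square linear system, respectively), parts (2) and (4) are obtained by the same dualization, and the $r=1$ case is handled through Proposition~\ref{p:Ddeg when r=1}. The only difference is that you spell out the duality and the genericity bookkeeping slightly more explicitly than the paper does.
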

We omit the computation of $\beta\cdot\Dunb$ when $k\nmid d$ and $\alpha\cdot\Ddeg$ when $r\nmid d$ because we will not need them.

\section{Proof of the theorem when $r>0$} \label{s:proof r>0}

In this section, we prove Theorem~\ref{t:main} when $r>0$. The case $r=0$ will be treated in the next section.

First we find the divisor classes of $\Dunb$ and $\Ddeg$ in terms of the basis $D$ and $Y$. Starting with $\Dunb$, we want to determine the integers $e_1$ and $e_2$ such that \[
    \Dunb=e_1 D+ e_2 Y, \]
using the intersection numbers computed in Section~\ref{s:intersect basis} and Section~\ref{s:intersect generators}. We handle the cases $k\mid d$ and $k\nmid d$ separately:

\begin{itemize}
\item If $k\nmid d$, we claim that $\Dunb=d_1(\ell_1+1)\bigl(-D+(d+d_1+1)Y\bigr)$. This can be seen by intersecting $\alpha$ with $\Dunb=e_1 D+ e_2 Y$, which gives 
\begin{align*}
d_1(\ell_1+1)\sum_{i=1}^{\ell_1}a_i&=e_1\Bigl((d+d_1)\sum_{i=1}^{\ell_1}a_i + (d+d_1+1) \sum_{i=\ell_1+1}^k a_i\Bigr)+e_2\Bigl(\sum_{i=1}^k a_i\Bigr) \\
&=\bigl(e_1(d+d_1)+e_2\bigr)\sum_{i=1}^{\ell_1}a_i+\bigl(e_1(d+d_1+1)+e_2\bigr)\sum_{i=\ell_1+1}^k a_i.
\end{align*}
As this equality holds for all sufficiently large $a_i$'s, one sees that \[
 e_1=-d_1(\ell_1+1), \quad e_2=d_1(\ell_1+1)(d+d_1+1). \]

\item If $k\mid d$, we claim that $\Dunb=-D+(d+d_1)Y$. If $r\nmid d$, one can see this by intersecting $\beta$ with $\Dunb=e_1 D+ e_2 Y$, which gives 
\begin{align*}
(d_1+d_2)\sum_{i=1}^{\ell_2}b_i + (&d_1+d_2+1) \sum_{i=\ell_2+1}^r b_i \\
&=e_1\Bigl((d-d_2)\sum_{i=1}^{\ell_2}b_i + (d-d_2-1) \sum_{i=\ell_2+1}^r b_i\Bigr)+e_2\Bigl(\sum_{i=1}^r b_i\Bigr) \\
&=\bigl(e_1(d-d_2)+e_2\bigr)\sum_{i=1}^{\ell_2}b_i+\bigl(e_1(d-d_2-1)+e_2\bigr)\sum_{i=\ell_2+1}^r b_i.
\end{align*}
As this equality holds for all sufficiently large $b_i$'s, one sees that $e_1=-1$ and $e_2=d+d_1$. If $r\mid d$, intersecting $\alpha$ with $\Dunb=e_1 D+ e_2 Y$ yields \[
0=e_1(d+d_1) \sum_{i=1}^k a_i+e_2\sum_{i=1}^k a_i, \]
while intersecting $\beta$ with $\Dunb=e_1 D+ e_2 Y$ yields \[
(d_1+d_2) \sum_{i=1}^r b_i=e_1(d-d_2) \sum_{i=1}^r b_i+e_2\sum_{i=1}^r b_i. \]
So together they still imply $e_1=-1$ and $e_2=d+d_1$.
\end{itemize}
The divisor class of $\Ddeg$ can be found with similar argument: 
\begin{itemize}
\item If $r\nmid d$, one sees that \[
\Ddeg=d_2(\ell_2+1)\bigl(D+(-d+d_2+1)Y\bigr) \]
by intersecting with the curve $\beta$.
\item If $r\mid d$, one sees that \[
\Ddeg=D+(-d+d_2)Y \]
by intersecting with the curve $\alpha$ if $k\nmid d$, or intersecting with both $\alpha$ and $\beta$ if $k\mid d$.
\end{itemize}
Thus the expressions of the classes of $\Dunb$ and $\Ddeg$ in terms of the basis $D$ and $Y$ in Theorem~\ref{t:main} are verified. 

It remains to prove that $\Dunb$ and $\Ddeg$ span the effective cone. The usual strategy to show that an effective divisor lies on the boundary of the effective cone is to check that it has zero intersection with some moving curve. That was the method Coskun-Starr used in \cite{CS}, where, since they were dealing with the case $d=r$, it is not hard to see that all generic curves are conjugate under the action of $\PP GL(V)$ and are thus moving curves (\cite[Lemma~2.4]{CS}). Unfortunately we were unable to see how to adapt the argument to handle the case $d>r$. So we resort to the following lemma instead, which basically says that since the rank of $\Pic R$ is only two, it is enough to \emph{pointwisely} find a curve which has zero intersection with $\Dunb$ (resp. $\Ddeg$). 

\begin{lemma} \label{l:span Eff}
Suppose two effective divisors $D_1$ and $D_2$ on $R$ are linearly independent in $\Pic R$. If there is an open subset $U$ of $R$ such that for each point $p\in U$, there exists a curve $\gamma_i$ (depending on $p$) in $R$ passing through $p$ and $\gamma_i \cdot D_i=0$, $i=1,2$, then $D_1$ and $D_2$ span the effective cone of $R$.
\end{lemma}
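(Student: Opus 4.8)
The plan is to establish the equivalent statement that every effective divisor class on $R$ lies in the cone $\sigma=\mathbb{R}_{\ge 0}[D_1]+\mathbb{R}_{\ge 0}[D_2]$; since $[D_1]$ and $[D_2]$ already belong to $\mathrm{Eff}(R)$, this forces $\mathrm{Eff}(R)=\sigma$, which is exactly the assertion that $D_1$ and $D_2$ span the effective cone. Two facts will drive the argument. First, $R$ is irreducible (by \cite[Theorem~2.1]{Str}), so the nonempty open set $U$ is dense and cannot be contained in any proper closed subset; in particular $U$ contains points avoiding the supports of any prescribed finite collection of divisors. Second, recall from the Introduction that in the range $k\ge 2$, $d\ge 1$ one has $\Pic R\cong\ZZ^2$; since $D_1$ and $D_2$ are linearly independent there, they form a $\QQ$-basis of $\Pic(R)\otimes\QQ$. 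Consequently an irreducible curve $\gamma$ with $\gamma\cdot D_i=0$ must have $\gamma\cdot D_j>0$ for the complementary index $j$, for otherwise $\gamma\cdot L=0$ for every $L\in\Pic(R)\otimes\QQ$, contradicting $\gamma\cdot H>0$ for an ample $H$.

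Next I would fix an arbitrary effective divisor $E$ on $R$ and write $[E]=aD_1+bD_2$ with $a,b\in\QQ$, the goal being $a\ge 0$ and $b\ge 0$. Choose a point $p\in U$ lying outside the proper closed subsets $\Supp D_1$, $\Supp D_2$, and $\Supp E$, which is possible by the first fact. By hypothesis there is an irreducible curve $\gamma_2\subset R$ through $p$ with $\gamma_2\cdot D_2=0$. Since $p\in\gamma_2$ avoids all three supports, $\gamma_2$ is contained in none of them, so the sections cutting out $D_1$ and $E$ restrict to nonzero sections on $\gamma_2$ and hence $\gamma_2\cdot D_1\ge 0$, $\gamma_2\cdot E\ge 0$; moreover $\gamma_2\cdot D_1>0$ by the second fact. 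Pairing the relation $[E]=aD_1+bD_2$ with $\gamma_2$ gives
\[
\gamma_2\cdot E=a\,(\gamma_2\cdot D_1)+b\,(\gamma_2\cdot D_2)=a\,(\gamma_2\cdot D_1),
\]
so dividing by the positive number $\gamma_2\cdot D_1$ forces $a\ge 0$. The symmetric computation with a curve $\gamma_1$ through $p$ satisfying $\gamma_1\cdot D_1=0$ gives $\gamma_1\cdot D_2>0$ and $0\le\gamma_1\cdot E=b\,(\gamma_1\cdot D_2)$, whence $b\ge 0$. Therefore $[E]\in\sigma$; as $E$ was arbitrary, $\mathrm{Eff}(R)\subseteq\sigma$, and the reverse inclusion is immediate, so $D_1$ and $D_2$ span the effective cone.

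I do not expect a genuine obstacle: the whole force of the lemma is the observation that, because $\Pic R$ has rank only two, it suffices to produce curves killing $D_1$ and curves killing $D_2$ \emph{separately and merely pointwise}, rather than a single moving curve as in the classical approach. The only points needing care are routine --- that an irreducible curve $\gamma$ with $\gamma\cdot D_i=0$ through a point off $\Supp D_i$ cannot lie in $\Supp D_i$, so that $\gamma$ meets $D_i$ and $E$ with nonnegative intersection, and that linear independence of $D_1$ and $D_2$ in the rank-two group $\Pic R$ simultaneously makes them a $\QQ$-basis of $\Pic(R)\otimes\QQ$ and forces the curve class orthogonal to $D_i$ to pair strictly positively with $D_j$.
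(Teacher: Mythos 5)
Your proof is correct and is essentially the paper's own argument: pick $p\in U$ off the relevant supports, pair $E=aD_1+bD_2$ with the curve killing one of the $D_i$, and use that its intersection with the other $D_j$ is strictly positive to conclude the coefficient is nonnegative. The only difference is that you spell out (via ampleness and the rank-two Picard group) why that intersection is strictly positive, a point the paper asserts without comment.
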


\begin{proof} 
Let $E$ be an arbitrary effective divisor on $R$ and write \[
  E=e_1 D_1 + e_2 D_2, \quad e_1,e_2\in\QQ.  \]
Pick a point $p\in U$ such that $p\notin E\cup D_2$, and let $\gamma_1$ be a curve through $p$ such that $\gamma_1\cdot D_1=0$. Then $\gamma_1\cdot E\ge 0$ and $\gamma_1\cdot D_2> 0$, hence $e_2\ge 0$. Similarly $e_1\ge 0$.
\end{proof}

To apply the lemma, we choose the open subset $U$ of $R$ to be \[
U=\{p\in R^0 \mid \text{Both $\cA_p$ and $\BB_p$ have the most balanced splitting types possible}\}. \]
It is not hard to see that given any $p\in U$, there exist curves $\alpha$ and $\beta$ of the forms described in Section~\ref{s:test curves} which pass through $p$ and satisfy $\alpha\cdot\Dunb=0$ and $\beta\cdot\Ddeg=0$. For example, as done in Section~\ref{s:intersect generators}, after choosing a basis for $V$ and for $H^0\bigl(\PP^1,\OO_{\PP^1}(1)\bigr)$, we can represent a curve of type $\alpha$ by an $n \times k$ matrix of the form \[
 \varphi=\biggl(\sum_{t=0}^{m_j}s_{ij,t}\,x^ty^{m_j-t}\biggr)_{\substack{1\le i\le n \\ 1\le j \le k}} \]
where $s_{ij,t}\in H^0(C,A_j)$. A point $p\in U$ can be represented by a similar matrix \[
 \biggl(\sum_{t=0}^{m_j}p_{ij,t}\,x^ty^{m_j-t}\biggr)_{\substack{1\le i\le n \\ 1\le j \le k}} \]
where $p_{ij,t}$ are constants. In order for the curve $\varphi$ to pass through $p$, it is enough that at some point in $C$, the ratio of the sections $s_{ij,t}$ in each column coincides with the corresponding ratio given by the constants $p_{ij,t}$. And one sees from Proposition~\ref{p:intersect generators} that $\alpha\cdot\Dunb$ is always $0$ if $k\mid d$, while if $k\nmid d$ then we take $a_1=\cdots=a_{\ell_1}=0$. Although this would require taking the line bundles $A_1,\ldots,A_{\ell_1}$ to be trivial, the other line bundles $A_{\ell_1+1},\ldots,A_k$ and their sections could still be chosen general enough so that all the previous intersection computations went through. Thus we can conclude from Lemma~\ref{l:span Eff} that $\Dunb$ and $\Ddeg$ span the effective cone of $R$.

\section{Proof of the theorem when $r=0$} \label{s:proof r=0}

The case $r=0$ of Theorem~\ref{t:main} requires separate treatment for two reasons: the divisor $\Ddeg$ is defined in a totally different manner, and the test curve $\beta$ defined in Section~\ref{s:test curves} no longer exists. We do still have the test curve $\alpha$, as well as the formula for $\alpha\cdot\Dunb$ in Proposition~\ref{p:intersect generators}~(1). Let us begin by computing $\alpha\cdot\Ddeg$. Let \[
\varphi\colon A_S=\bigoplus_{j=1}^k A_j^{\spcheck}(-m_j)\longrightarrow V_S \]
be the injective sheaf homomorphism on $S=C\times \PP^1$ that induces the curve $\alpha\colon C\to R$. Choosing a basis $v_1,\ldots,v_n$ for $V$ and a basis $x,y$ for $H^0\bigl(\PP^1,\OO_{\PP^1}(1)\bigr)$, we can express $\varphi$ as an $n\times n$ matrix \[
 \varphi=\biggl(\sum_{t=0}^{m_j}s_{ij,t}\,x^ty^{m_j-t}\biggr)_{1\le i,j\le n} \]
where $s_{ij,t}\in H^0(C,A_j)$. Let $B_S=\coker\varphi$. By the definition of $\Ddeg$, $\alpha\cdot\Ddeg$ equals the number of points $p\in C$ such that the support of $B_{S,p}=B_S|_{\{p\}\times \PP^1}$ does not consist of $d$ distinct points. The support of $B_{S,p}$ is precisely the set of points on $\PP^1$ whose homogeneous coordinates $(x:y)$ satisfy $\det \varphi_p=0$, where $\varphi_p$ denotes the matrix $\varphi$ with every $s_{ij,t}$ evaluated at $p$. Hence the support of $B_{S,p}$ does not consist of $d$ distinct points if and only if the discriminant of $\det\varphi_p$ vanishes. The discriminant of $\det\varphi$ is a section of the line bundle $(A_1\otimes\cdots\otimes A_k)^{\otimes 2(d-1)}$ on $C$. Therefore \[
\alpha\cdot\Ddeg=2(d-1)\sum_{i=1}^k a_i. \]

Next we will show that, given any point in $R\setminus \Ddeg$, there exists a curve $\gamma\colon C\to R$ passing through it such that \[
  \gamma\cdot\Ddeg=\gamma\cdot Y= 0. \]
Let $B$ be a degree~$d$, rank~$0$ quotient sheaf of $V_{\PP^1}$ which corresponds to a point in $R\setminus \Ddeg$, namely $\Supp B$ is a set of $d$ distinct points. We will construct the curve $\gamma$ by deforming $B$ into a family. In fact, we will only deform the stalk of $B$ at a single point in $\Supp B$. So let $p$ be an arbitrary point in $\Supp B$, and denote by $(\OO_p,\mm_p)$ the local ring of $\PP^1$ at $p$. Let $V_p$ and $B_p$ be the stalks of the sheaves $V_{\PP^1}$ and $B$ at $p$, respectively, and let $A_p$ be the kernel of $V_p\to B_p$. Since $\Supp B$ is a set of $d$ distinct points, $\bigwedge^n A_p$ has colength one in $\bigwedge^n V_p$. Hence there exists an $\OO_p$-basis $\nu_1,\ldots,\nu_n$ of $V_p$ such that \[
 A_p=\mm_p \nu_1 \oplus \OO_p \nu_2 \oplus \cdots \oplus \OO_p \nu_n. \]
Let $C=\PP^1$ with homogeneous coordinates $(z_0:z_1)$. Consider the following family of $\OO_p$-submodules of $V_p$ over $C$: \[
A_p(z_0:z_1)=\begin{cases}
 \mm_p \nu_1 \oplus \OO_p (z_1\nu_1+z_0\nu_2) \oplus \OO_p \nu_3 \oplus \cdots \oplus \OO_p \nu_n, &\text{if $z_0\ne 0$;} \\
 \mm_p \nu_2 \oplus \OO_p (z_1\nu_1+z_0\nu_2) \oplus \OO_p \nu_3 \oplus \cdots \oplus \OO_p \nu_n, &\text{if $z_1\ne 0$.}
\end{cases} \]
This is well-defined, since when both $z_0$ and $z_1$ are nonzero, the two expressions give the same submodule. Let $B_p(z_0:z_1)=V_p/ A_p(z_0:z_1)$, and let $B(z_0:z_1)$ be the sheaf on $\PP^1$ whose stalk at $p$ is $B_p(z_0:z_1)$, and at every other point is just the same as the stalk of $B$. Then $B(z_0:z_1)$ induces a curve $\gamma\colon C\to R$ which maps $(1:0)\in C$ to the point in $R$ corresponding to $B$. Note that $\Supp B(z_0:z_1)=\Supp B$ for all $(z_0:z_1)$. This implies that $\gamma$ does not meet $\Ddeg$, hence $\gamma\cdot\Ddeg=0$. It also implies that, viewed as a sheaf on the surface $S=C\times \PP^1$, $B(z_0:z_1)$ (and hence also its first Chern class) is completely supported on $C\times \Supp B$, thus $\gamma\cdot Y=0$.

We now proceed to find the divisor classes of $\Dunb$ and $\Ddeg$ in terms of the basis $D$ and $Y$. If $k\nmid d$, this can be done using the curve $\alpha$ alone: the class of $\Dunb$ has been worked out in Section~\ref{s:proof r>0} to be \[
  \Dunb=d_1(\ell_1+1)\bigl(-D+(d+d_1+1)Y\bigr). \]
As for $\Ddeg$, writing $\Ddeg=e_1D+e_2Y$ and intersecting with $\alpha$ gives 
\begin{align*}
2(d-1)\sum_{i=1}^k a_i&=e_1\Bigl((d+d_1)\sum_{i=1}^{\ell_1}a_i + (d+d_1+1) \sum_{i=\ell_1+1}^k a_i\Bigr)+e_2\Bigl(\sum_{i=1}^k a_i\Bigr) \\
&=\bigl(e_1(d+d_1)+e_2\bigr)\sum_{i=1}^{\ell_1}a_i+\bigl(e_1(d+d_1+1)+e_2\bigr)\sum_{i=\ell_1+1}^k a_i.
\end{align*}
It follows that $e_1=0$, $e_2=2(d-1)$, and hence $\Ddeg=2(d-1)Y$. If $k\mid d$, we can still deduce that $\Ddeg=2(d-1)Y$ with the extra help from the new curve $\gamma$, for intersecting $\alpha$ with $\Ddeg=e_1D+e_2Y$ now gives \[
2(d-1)\sum_{i=1}^k a_i=e_1(d+d_1)\sum_{i=1}^{k}a_i +e_2\sum_{i=1}^k a_i=\bigl(e_1(d+d_1)+e_2\bigr)\sum_{i=1}^{k}a_i, \]
which is not enough to determine $e_1$ and $e_2$. So we need the additional information from intersecting $\gamma$ with $\Ddeg=e_1D+e_2Y$, which turns out to be $e_1=0$ since $\gamma\cdot\Ddeg=\gamma\cdot Y=0$. Hence $e_2=2(d-1)$, and $\Ddeg=2(d-1)Y$ still holds. As for $\Dunb$, writing $\Dunb=e_1D+e_2Y$ and intersecting with $\alpha$ gives \[
0=e_1(d+d_1)\sum_{i=1}^{k}a_i +e_2\sum_{i=1}^k a_i=\bigl(e_1(d+d_1)+e_2\bigr)\sum_{i=1}^{k}a_i, \]
which implies that  \[
  \Dunb=c_1\bigl(-D+(d+d_1)Y\bigr) \]
for some nonzero constant $c_1$. Unfortunately since we do not know how to compute $\gamma\cdot \Dunb$ and $\gamma\cdot D$, we do not know the value of $c_1$. However we can at least see that $c_1>0$ once we know that $\Dunb$ and $\Ddeg$ span the effective cone, since (the closure of) the effective cone has to contain the nef cone (cf. Figure~\ref{f:cones} right).

It remains to show that $\Dunb$ and $\Ddeg$ span the effective cone. For this we use Lemma~\ref{l:span Eff}, taking the open subset $U$ of $R$ to be \[
 U=\{p\in R\mid \text{$\cA_p$ has the most balanced splitting type possible}\}\setminus \Ddeg. \]
As we have seen, given any $p\in U$, there exist curves $\alpha$ and $\gamma$ which pass through $p$ and satisfy $\alpha\cdot\Dunb=0$ and $\gamma\cdot\Ddeg=0$. Hence $\Dunb$ and $\Ddeg$ span the effective cone of $R$ by Lemma~\ref{l:span Eff}.



\begin{thebibliography}{00}

\bibitem{Ber94}
A. Bertram, \emph{Towards a Schubert calculus for maps from a Riemann surface to a Grassmannian}, Internat. J. Math. \textbf{5}~(1994), 811--825.

\bibitem{Ber97}
A. Bertram, \emph{Quantum Schubert calculus}, Adv. Math. \textbf{128}~(1997), 289--305.

\bibitem{BDW}
A. Bertram, G. Daskalopoulos and R. Wentworth, \emph{Gromov invariants for holomorphic maps from Riemann surfaces to Grasmannians}, J. Amer. Math. Soc. \textbf{9}~(1996), 529--571 

\bibitem{CS}
I. Coskun and J. Starr, \emph{Divisors on the space of maps to Grassmannians}, Int. Math. Res. Notices, vol.~2006, Article ID 35273, 25 pages, 2006. 

\bibitem{Ram06}
C. Ramirez, \emph{The degree of the variety of rational ruled surfaces and Gromov-Witten invariants}, Trans. Amer. Math. Soc. \textbf{358}~(2006), 11--24.

\bibitem{Ram09}
C. Ramirez, \emph{On a stratification of the Kontsevich moduli space $\overline{M}_{0,n}(G(2,4),d)$ and enumerative geometry}, J. Pure Appl. Algebr. \textbf{213}~ (2009), 857--868.

\bibitem{RRW}
M. Ravi, J. Rosenthal, and X. Wang, \emph{Degree of the generalized Pl\"ucker
embedding of a Quot scheme and quantum cohomology}, Math. Ann. \textbf{311}~(1998), 11--26.

\bibitem{Shao}
Yijun Shao, \emph{A compactification of the space of algebraic maps from $\PP^1$ to a Grassmannian}, Ph.D. Thesis, University of Arizona.

\bibitem{Str}
S. Str\o mme, \emph{On parametrized rational curves in Grassmann varieties},
Lecture Notes in Mathematics, Volume 1266, Springer, Berlin, 1987, 251--272.

\end{thebibliography}
\end{document}